\newtheorem{theorem}{Theorem}[section]
\newtheorem{proposition}{Proposition}[section]
\theoremstyle{definition}
\numberwithin{equation}{section}
\def\fmod#1 #2{#1\ ({\rm mod}\ #2)}
\newcommand{\ds}{\displaystyle}
\newcommand{\mf}[1]{\mathfrak{#1}}
\renewcommand{\t}[1]{\tau}
\newcommand{\der}{\partial}
\newcommand{\e}{\varepsilon}
\renewcommand{\t}{\tilde }
\newcommand{\w}{\omega}
\renewcommand{\i}{\mf i}
\newcommand\nc{\newcommand}
\DeclareMathOperator
\nc\sA{\mathscr A}
\nc{\und}{\underline}
\nc{\derfrac}[2]{\frac{\der{#1}}{\der{#2}}}
\nc{\dv}[1]{\frac{\der}{\der{#1}}}
\nc{\df}{\dfrac}
\renewcommand{\t}{\tau}
\begin{document}

\title{On a conjecture of Dekking : The sum of digits of even numbers}

\author{Iurie Boreico}
\address{Department of Mathematics, Stanford University, 450 Serra Mall, Stanford, California 94305, USA}
\email{boreico@math.stanford.edu}

\author{Daniel El-Baz}
\address{School of Mathematics, University of Bristol, University Walk, Bristol, BS8 1TW, United Kingdom}
\email{Daniel.El-Baz@bristol.ac.uk}

\author{Thomas Stoll}
\address{Universit\'e de Lorraine, Institut Elie Cartan de Lorraine, UMR 7502, Vandoeuvre-l\`es-Nancy, F-54506, France}
\email{thomas.stoll@univ-lorraine.fr}

\maketitle

\begin{abstract}
Let $q\geq 2$ and denote by $s_q$ the sum-of-digits function in base $q$. For $j=0,1,\dots,q-1$ consider
$$\# \{0 \le n < N  : \;\;s_q(2n) \equiv j \pmod q \}.$$ In 1983, F.~M. Dekking conjectured that this quantity is greater than $N/q$ and, respectively, less than $N/q$ for infinitely many $N$, thereby claiming an absence of a drift (or Newman) phenomenon. In this paper we prove his conjecture. 
\end{abstract}

\bigskip
\section{Introduction}

Let $q\geq 2$ and denote by $s_q: \mathbb{N} \to \mathbb{N}$ the sum-of-digits function in the $q$-ary digital representation of integers. In his influential paper from 1968, Gelfond~\cite{gelfond68} proved the following result.\footnote{As usual, we write $f(N)=O(1)$ if $|f(N)|<C$ for some absolute constant $C$, and $f(N)=O_q(1)$ if the implied constant depends on $q$.}
\begin{theorem}
Let $q,d,m\geq 2$ and $a,j$ be integers with $0\leq a<d$ and $0\leq j<m$. If $(m,q-1)=1$ then 
\begin{equation}\label{geleq}
\# \{\;0 \le n < N \, : \;\; n\equiv a\pmod d, \;\;s_q(n) \equiv j \pmod m \} = \frac{N}{dm}+g(N),
\end{equation}
where $g(N)=O_q(N^\lambda)$ with $\lambda = \frac{1}{2\log q}\log \frac{q \;\sin(\pi/2m)}{\sin(\pi/2mq)} < 1$.
\end{theorem}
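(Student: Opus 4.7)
The plan is to reduce (\ref{geleq}) to a bound on a digit-twisted exponential sum and then exploit the base-$q$ digit structure of $s_q(n)$. First, using orthogonality of additive characters (with $e(x):=e^{2\pi i x}$), the left-hand side of (\ref{geleq}) equals
$$\frac{1}{dm}\sum_{\ell=0}^{d-1}\sum_{k=0}^{m-1} e\!\left(-\frac{\ell a}{d}-\frac{kj}{m}\right) T_{\ell,k}(N), \qquad T_{\ell,k}(N) := \sum_{0\le n<N} e\!\left(\frac{\ell n}{d}+\frac{k\,s_q(n)}{m}\right).$$
The trivial character $(\ell,k)=(0,0)$ produces the main term $N/(dm)$, while $k=0$ with $\ell\ne 0$ is a geometric sum contributing $O_d(1)$. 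All of the error is therefore governed by the twisted sums $T_{\ell,k}(N)$ with $k\ne 0$.

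For $k\ne 0$ and a complete block $N=q^L$, the base-$q$ expansion $n=\sum_{i<L}\varepsilon_i q^i$ factorizes the integrand digit by digit, giving
$$T_{\ell,k}(q^L) \;=\; \prod_{i=0}^{L-1} \phi\!\left(\frac{\ell q^i}{d}+\frac{k}{m}\right), \qquad \phi(u) := \sum_{\varepsilon=0}^{q-1}e(u\varepsilon) = e\!\left(\frac{(q-1)u}{2}\right)\frac{\sin(\pi q u)}{\sin(\pi u)}.$$
A standard $q$-ary decomposition of $[0,N)$ into at most $(q-1)\lceil \log_q N\rceil$ complete blocks extends the estimate to arbitrary $N$ at harmless logarithmic cost, so everything reduces to bounding the above product by $O_q((q^L)^\lambda)$.

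The heart of the argument is this product estimate. Since $|\phi(u)|=q$ holds only at $u\in\mathbb Z$, each factor is at most $q$, with equality at isolated shifts. The hypothesis $(m,q-1)=1$ is exactly what forces power savings: without it, since $s_q(n)\equiv n\pmod{q-1}$, the character $n\mapsto e(k\,s_q(n)/m)$ would (for suitable $k\ne 0$) factor through a Dirichlet character of $n$ and yield no cancellation at all. Under this hypothesis, a trigonometric optimization that pairs consecutive digit factors (so as to average out the dependence of $\ell q^i/d\pmod 1$ on $i$) produces a sharp constant $\rho=\rho(q,m)<1$ with
$$\prod_{i=0}^{L-1}\bigl|\phi\!\left(\tfrac{\ell q^i}{d}+\tfrac{k}{m}\right)\bigr|\;\le\;C_q\,(q\rho)^L, \qquad \rho^2=\frac{\sin(\pi/(2m))}{q\sin(\pi/(2mq))},$$
and then $1+\log_q\rho=\lambda$ recovers exactly the exponent in the statement.

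The principal obstacle is obtaining this product bound with the sharp constant. Individually each factor $|\phi|$ is only trivially at most $q$, so the decay must be extracted collectively from pairs (or longer blocks) of consecutive digit positions via a trigonometric manipulation of $\sin(\pi q u)/\sin(\pi u)$; identifying the worst-case shift that pins down the explicit $\rho$, uniformly in $\ell$ and in every $k\ne 0\pmod m$, is the delicate analytic step where $(m,q-1)=1$ is used in an essential way.
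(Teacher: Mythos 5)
This statement is Gelfond's 1968 theorem, which the paper only quotes as background and does not prove, so there is no in-paper argument to compare against; I can only assess your proposal on its own terms. The skeleton you give is indeed the classical route: the orthogonality reduction is correct (the $(\ell,k)=(0,0)$ term gives $N/(dm)$, the $k=0$, $\ell\neq 0$ terms are geometric and $O_d(1)$), the digit-by-digit factorization $T_{\ell,k}(q^L)=\prod_{i<L}\phi(\ell q^i/d+k/m)$ is correct, the $q$-ary block decomposition does extend complete-block bounds to all $N$ (in fact at $O(1)$ multiplicative cost, since $\lambda>1/2>0$ makes the geometric sum over block sizes converge), and your exponent bookkeeping $1+\log_q\rho=\lambda$ checks out. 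Your heuristic for why $(m,q-1)=1$ is needed is also essentially right, though the obstruction character $n\mapsto e(k\,s_q(n)/m)$, which via $s_q(n)\equiv n\pmod{q-1}$ becomes a periodic \emph{additive} character of $n$, is not a Dirichlet character.

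The genuine gap is that the entire analytic content of the theorem is asserted rather than proved. Writing $\theta=(q-1)k/m$, consecutive arguments satisfy $u_{i+1}=qu_i-\theta$, so after pairing disjoint consecutive factors everything reduces to the uniform two-variable inequality
\begin{equation*}
\left|\frac{\sin(\pi q u)}{\sin(\pi u)}\right|\cdot\left|\frac{\sin(\pi q (qu-\theta))}{\sin(\pi (qu-\theta))}\right|\;\le\;\frac{q\,\sin(\pi/2m)}{\sin(\pi/2mq)}\qquad\text{for all real }u,
\end{equation*}
valid whenever $\theta$ is at distance at least $1/m$ from the integers (which is exactly what $(m,q-1)=1$ and $k\not\equiv 0\pmod m$ guarantee). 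You name this as ``a trigonometric optimization that pairs consecutive digit factors'' and call it the delicate step, but you do not carry it out, and it is not routine: the bound must hold uniformly in $u$, the extremal configuration ($u\in\mathbb{Z}$, $\theta\equiv\pm 1/m$) actually attains the constant asymptotically, and away from it one must trade the size of the first factor against the displacement of $qu-\theta$ from the integers. Everything else in your outline is standard once this lemma is in hand; without it, the proposal establishes nothing beyond the trivial bound $|T_{\ell,k}(q^L)|\le q^L$. To complete the proof you would need to state and prove this two-factor inequality (this is Gelfond's key lemma) rather than gesture at it.
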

Shevelev~\cite{shevelev08, shevelev09} recently determined the optimal exponent $\lambda$ in the error term in Gelfond's asymptotic formula when $q=m=2$, and Shparlinski~\cite{shparlinski10} showed that in this case it can be arbitrarily small for sufficiently large primes $d$. 

\medskip

The oscillatory behaviour of the error term $g(N)$ in~(\ref{geleq}) is still not completely understood. The story can be said to have originated with the observation by Moser in the 1960s that for the quintuple of parameters
\begin{equation}\label{newquint}
  (q,a,d,j,m)\equiv (2,0,3,0,2)
\end{equation}  
the error term seems to have \textit{constant} positive sign, \textit{i.e.}, $g(N)>0$ for all $N\geq 1$. In 1969, Newman~\cite{newman69} (with a much more precise result by Coquet~\cite{coquet83}) proved this observation and there is at present a large number of articles which establish so-called \textit{Newman phenomena}, \textit{Newman-like phenomena} or \textit{drifting phenomena} for general classes of quintuples $(q,a,d,j,m)$ extending~(\ref{newquint}). The two main techniques come from a direct inspection of the recurrence relations using the $q$-additivity of the sum-of-digits function, and from the determination of the maximal and minimal value of a related fractal function which is continuous but nowhere differentiable~\cite{goldstein92, coquet83, tenenbaum97}. We refer the reader to the monograph of Allouche and Shallit~\cite{alloucheshallit03} and the article of Drmota and Stoll~\cite{drmotastoll08} for a list of references. Characterizing all $(q,a,d,j,m)$ for which one has a Newman-like phenomenon is still wide open.

\medskip

The aim of the present article is to prove a related conjecture Dekking (see~\cite[``Final Remark'', p. 32-11]{dekking83}) made in 1983 at the S\'eminaire de Th\'eorie des Nombres de Bordeaux concerning a \textit{non-drifting phenomenon}, that is, a situation where the error $g(N)$ is \textit{oscillating in sign} (as $N\to \infty$). To our knowledge, this conjecture has not yet been addressed in the literature, and we will provide a self-contained proof here. 

\bigskip

\noindent \textbf{Conjecture (Dekking, 1983):} Let $q\geq 2$ and $0\leq j<q$ and set
$$(q,a,d,j,m)\equiv  (q,0,2,j,q).$$
Then $g(N)<0$ and $g(N)>0$ infinitely often.

\bigskip

Dekking was mostly interested in finding the optimal error term in~(\ref{geleq}) (or, as he puts it, the \textit{typical exponent} of the error term) and obtained various results for the cases $q=2$, $d$ arbitrary, and $d=2$, $q$ arbitrary. As for the conjecture, he proved the case of $q=3$, $j=0,1,2$ via an argument with a geometrical flavour. 

\medskip

Our main result is as follows. 

\begin{theorem}\label{mtheo}
Let $q\geq 2$, $0\leq j<q$ and set
$$(q,a,d,j,m)\equiv (q,a,d,j,q).$$
\begin{enumerate}
\item[(i)] If $d\mid q$, then $g(N)=O(1)$ and $g(N)$ changes signs infinitely often as $N\to \infty$.
\item[(ii)] If $d\mid q-1$, then $g(N)$ can take arbitrarily large positive values as well as arbitrarily large negative values as $N\to \infty$. 
\end{enumerate}
\end{theorem}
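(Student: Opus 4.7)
The plan is to expand $g(N)$ by Fourier analysis on $\Z/q\Z\times\Z/d\Z$. With $\omega=e^{2\pi i/q}$ and $\zeta=e^{2\pi i/d}$, decomposing the indicator of the joint conditions $n\equiv a\pmod d$ and $s_q(n)\equiv j\pmod q$ yields
\[
g(N)=\frac{1}{dq}\sum_{(k,\ell)\ne(0,0)}\omega^{-kj}\zeta^{-\ell a}\,S_N(k,\ell),\qquad S_N(k,\ell):=\sum_{n=0}^{N-1}\omega^{ks_q(n)}\zeta^{\ell n}.
\]
For $N=q^L$ the sum factors across digits as $T_L(k,\ell)=\prod_{i=0}^{L-1}\sum_{n_i=0}^{q-1}\omega^{kn_i}\zeta^{\ell n_iq^i}$, and for arbitrary $N$ one has an analogous splitting along the digit positions of $N$. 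The two cases of the theorem correspond to radically different behaviours of $\zeta^{q^i}$: it equals $1$ when $d\mid q$ and equals $\zeta$ when $d\mid q-1$.

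For (i), the identity $\zeta^{q^i}=1$ for $i\ge 1$ reduces every digit factor at positions $i\ge 1$ to $\sum_{n=0}^{q-1}\omega^{kn}$, which vanishes for $k\ne 0$; a parallel calculation at $i=0$ handles the case $k=0,\ \ell\ne 0$. Hence $T_L(k,\ell)=0$ for every $(k,\ell)\ne(0,0)$ once $L\ge 2$, and in the decomposition of $S_N(k,\ell)$ only the contributions from the lowest two digits of $N$ survive, yielding $g(N)=O(1)$. Moreover the same analysis shows that $g(N)$ depends only on the triple $(N_0,N_1,s_q(N)\bmod q)$, which takes finitely many values, each attained on a positively-dense subset of $\N$. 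So for the sign change it suffices to exhibit two values of opposite sign: for $(a,j)=(0,0)$ one checks directly that $g(q)=1-1/d>0$ and $g(q^L+1)=-1/(dq)<0$ for all $L\ge 2$, and the general $(a,j)$ is reduced to this via the recurrence $g^{(a,j)}(q^L+N')=g^{(a,j)}(q^L)+g^{(a,j-1)}(N')$, valid whenever $L$ exceeds the digit-length of $N'$.

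For (ii), $\zeta^{q^i}=\zeta$ makes every digit factor equal $u(k,\ell):=(\zeta^\ell-1)/(\omega^k\zeta^\ell-1)$ for $\ell\ne 0$, so $T_L(k,\ell)=u(k,\ell)^L$. An elementary estimate from $qd\sin(\pi/d)>\pi$ shows that $r_0:=\max_{(k,\ell)}|u(k,\ell)|>1$; let $(k_0,\ell_0)$ achieve this maximum and write $u(k_0,\ell_0)=r_0e^{i\theta_0}$. Pairing complex-conjugate indices gives
\[
g(q^L)=\frac{2}{dq}\,r_0^L\cos(L\theta_0+\phi_0)+O(r_1^L)
\]
with $r_1<r_0$ and $\phi_0$ an explicit phase depending on $(a,j)$. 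If $\theta_0/\pi\notin\Q$, Weyl equidistribution of $L\theta_0\pmod{2\pi}$ forces the cosine to come close to $\pm 1$ for infinitely many $L$; if $\theta_0/\pi\in\Q$ the same conclusion follows except in the degenerate case where $u(k_0,\ell_0)$ is real-positive. This last case is handled by considering $N=q^L+q^{L'}$ together with the splitting $g^{(a,j)}(q^L+q^{L'})=g^{(a,j)}(q^L)+g^{(a-1,j-1)}(q^{L'})$ (valid because $q^L\equiv 1\pmod d$ with no carry), since the shift $(a,j)\mapsto(a-1,j-1)$ multiplies the leading coefficient by the nontrivial root of unity $\omega^{-k_0}\zeta^{-\ell_0}$ and so for some shift produces a leading term of opposite sign. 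The principal obstacle is precisely this degenerate subcase; the rest is routine once the digit factorisation is in place.
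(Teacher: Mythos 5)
Your overall strategy---discrete Fourier analysis over $\mathbb{Z}/q\mathbb{Z}\times\mathbb{Z}/d\mathbb{Z}$ followed by factorization over digit positions---is the same as the paper's, and your case (i) is essentially sound, with one small hole: your recurrence $g^{(a,j)}(q^L+N')=g^{(a,j)}(q^L)+g^{(a,j-1)}(N')$ shifts only $j$, never $a$, so the seed values you compute for $(a,j)=(0,0)$ do not cover $a\neq 0$; you need one pair of values of opposite sign for each $a$ (e.g.\ from $N=a+1$), which is easy but absent. The genuine gaps are in case (ii). You write the leading term as a single cosine $\frac{2}{dq}r_0^L\cos(L\theta_0+\phi_0)$, tacitly assuming that exactly one conjugate pair attains the maximal modulus $r_0$. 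Nothing rules out several pairs $(k,\ell)$ with $|u(k,\ell)|=r_0$; the leading term is then a sum of cosines with different frequencies and phases, and ``the cosine comes close to $\pm1$'' no longer produces a sign change --- a priori such a sum could vanish identically or keep a constant sign. This is precisely where the paper works hardest: it shows the numbers $u(k,\ell)$ are pairwise \emph{distinct} (so the coefficients, each a single root of unity, cannot cancel), that all their arguments are rational multiples of $\pi$ (so the normalized leading term is periodic in $L$), and that its average over one period is zero because no maximal $u(k,\ell)$ is real; a real, periodic, mean-zero, not identically vanishing sequence takes both signs.

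Your repair of the ``degenerate case'' $u(k_0,\ell_0)=r_0>0$ also fails as stated. In $N=q^L+q^{L'}$ with $L>L'$ the summand $g^{(a,j)}(q^L)\sim 2\operatorname{Re}(c)\,r_0^{L}$ dominates $g^{(a-1,j-1)}(q^{L'})=O(r_0^{L'})$, so when $\operatorname{Re}(c)\neq 0$ the shifted, lower-order term can never reverse the sign; and when $\operatorname{Re}(c)=0$ the leading term vanishes for every $L$ and you are thrown back onto the next modulus $r_1$, which you do not control. The observation that resolves this --- and the key lemma of the paper --- is that the degenerate case is vacuous: for $|z|=1$, $z\neq1$ one has $\arg\bigl((1-z)^2\bigr)=\arg z+\pi$, hence $\arg\bigl(u(k,\ell)^2\bigr)=-\arg(\omega^{k})$; so $u(k,\ell)$ real forces $\omega^{k}=1$ and then $u(k,\ell)=1$, contradicting $r_0>1$. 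Without this fact (or the periodicity/mean-zero argument above) your case analysis in (ii) does not close.
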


In the case of $d=2$ this proves Dekking's conjecture and covers all bases $q\geq 2$.

\section{Proof of Theorem~\ref{mtheo}}\label{secdekking}

For an integer $n\geq 0$, we write
$$n=(\varepsilon_k,\varepsilon_{k-1},\ldots,\varepsilon_0)_q$$
to refer to its $q$-ary digital expansion $n=\sum_{i=0}^k \varepsilon_i q^i$. Let $U(n) = \{ z \in \mathbb{C} \, | \, z^n = 1 \}$ denote the set of the $n$th roots of unity. 
We will make use of the following well-known formula from discrete Fourier analysis.
\begin{proposition}\label{fourier}
Let $ f(x) = \sum_{k = 0}^{\infty} a_k x^k\in\mathbb{C}[x]$, $n\geq 1$, $l\geq 0$ and set $\omega_n = e^{2 \pi i/ n}$. Then
$$\sum_{k \equiv l \, (\bmod n)} a_k x^k = \frac 1n \sum_{s = 0}^{n-1} \omega_n^{-ls} f(\omega_n^s x).$$
\end{proposition}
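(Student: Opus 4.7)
The plan is to verify the identity by a direct roots-of-unity filter computation, starting from the right-hand side. Expanding $f(\omega_n^s x) = \sum_{k\geq 0} a_k \omega_n^{sk} x^k$ and substituting into the sum over $s$, I would obtain
$$\frac{1}{n}\sum_{s=0}^{n-1} \omega_n^{-ls} f(\omega_n^s x) = \frac{1}{n}\sum_{s=0}^{n-1}\sum_{k\geq 0} a_k\, \omega_n^{s(k-l)} x^k.$$
Since $f\in\mathbb{C}[x]$ is a polynomial, only finitely many $a_k$ are nonzero, so both sums are finite and the order of summation may be swapped without any analytic concerns. After interchanging, the right-hand side becomes
$$\sum_{k\geq 0} a_k x^k \cdot \frac{1}{n}\sum_{s=0}^{n-1}\omega_n^{s(k-l)}.$$

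The key observation is then the orthogonality relation for the characters of $\mathbb{Z}/n\mathbb{Z}$: the inner sum is a geometric series in $\omega_n^{k-l}$, and evaluates to $1$ when $\omega_n^{k-l}=1$, that is, when $k\equiv l\pmod n$, and to $(\omega_n^{n(k-l)}-1)/(\omega_n^{k-l}-1)=0$ otherwise. Substituting this back kills every term with $k\not\equiv l\pmod n$ and leaves exactly $\sum_{k\equiv l\,(\bmod\,n)} a_k x^k$, which is the left-hand side. There is no genuine obstacle here: the whole argument is a single substitution followed by the standard orthogonality computation, and polynomiality of $f$ removes any convergence issue that could have arisen for a general power series.
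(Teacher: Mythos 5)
Your proof is correct and is essentially the same as the paper's: both compute the coefficient of $x^k$ on the right-hand side and apply the orthogonality relation $\frac{1}{n}\sum_{s=0}^{n-1}\omega_n^{s(k-l)}=1$ or $0$ according to whether $k\equiv l\pmod n$. The paper phrases it as a coefficient extraction rather than an explicit interchange of summation, but the content is identical.
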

\begin{proof}
The coefficient of $x^j$ in $\frac 1n \sum_{s = 0}^{n-1} \omega_n^{-ls} f(\omega_n^s x)$ is $\frac 1n \sum_{s = 0}^{n-1} a_j \omega_n^{s(j - l)},$ that is $a_j$ if $j \equiv l$~(mod $n$) and $0$ otherwise. \end{proof}

We deal with (i) $d\mid q$ and (ii) $d\mid q-1$ in Theorem~\ref{mtheo} separately in the two subsequent sections.

\subsection{The case $d\mid q$}

For $d=2$, $q$ even, Dekking remarked and left to the readers of his article (see~\cite[Remark before Proposition~5, p.32-08]{dekking83}) that the typical exponent $\lambda$ equals 0, \textit{i.e.}, $g(N)=O(1)$. This is due to the fact that when $q$ is even then the parity of an integer is completely encoded in the last digit of its base $q$ expansion. A similar situation applies when $d \mid q$. In order to find the oscillatory behaviour of $g(N),$ we calculate $g(N)$ explicitly.

\medskip

Define $$f_j(n) = c_j(n) - \frac 1q,$$ where 
$$c_j(n) =
\begin{cases}
 1 & \text{if }s_q(n) \equiv j \pmod q; \\
 0 & \text{otherwise.}
\end{cases}$$
Consider 
\begin{equation}\label{Djdef}
  D_j(N) = \sum_{\substack{0\leq n < N \\ n \equiv a \pmod d}} f_j(n),   
\end{equation}
thus
\begin{equation}\label{gDj}
  g(N)=D_j(N)-\frac{N}{dq}+\frac{1}{q}\left\lceil\frac{N-a}{d}\right\rceil.
\end{equation}
We want to find  infinitely many values of $N$ such that $g(N)>0$, respectively, $g(N)<0$. Since an integer in base $q$ (with $q$ divisible by $d$) gives remainder $a$ mod $d$ if and only if its last digit in base $q$ gives remainder $a$ mod $d$, we get for $N=(\varepsilon_k,\ldots,\varepsilon_0)_q$,
\begin{align*}
  D_j(N)&=\sum_{r=2}^{k}\sum_{\delta=0}^{\varepsilon_{r}-1}\; \sum_{\substack{0\leq i_0,i_1,\ldots,i_{r-1}\leq q-1\\i_0 \equiv a \pmod d}}\;f_j((\varepsilon_k,\ldots,\varepsilon_{r+1},\delta,i_{r-1},\ldots,i_0)_q)\\
&\quad +\sum_{\delta=0}^{\varepsilon_{1}-1} \sum_{\substack{i_0=0\\ i_0 \equiv a \pmod d}}^{q-1}f_j((\varepsilon_k,\ldots,\varepsilon_2,\delta,i_0)_q)\\
&\quad +\sum_{\substack{\delta=0\\ \delta \equiv a \pmod d}}^{\varepsilon_{0}-1} f_j((\varepsilon_k,\ldots,\varepsilon_1,\delta)_q).
  \end{align*}
For $r\geq 2$ we get
$$\sum_{\substack{0\leq i_0,i_1,\ldots,i_{r-1}\leq q-1\\i_0 \equiv a \pmod d}}f_j((\varepsilon_k,\ldots,\varepsilon_{r+1},\delta,i_{r-1},\ldots,i_0)_q)=
D_{j-\varepsilon_k-\cdots-\varepsilon_{r+1}-\delta}(q^r)=0.$$
Set $\alpha=j-s_q(N)+\varepsilon_1+\varepsilon_0$ and $\beta= j-s_q(N)+\varepsilon_0$. For the other two terms we then get by a direct calculation,
\begin{equation}\label{form1}
  \sum_{\delta=0}^{\varepsilon_{1}-1} \sum_{\substack{i_0=0\\ i_0 \equiv a \pmod d}}^{q-1}f_j((\varepsilon_k,\ldots,\varepsilon_2,\delta,i_0)_q)=
-\frac{\varepsilon_1 }{d}+\sum_{\delta=0}^{\varepsilon_1-1}\sum_{\substack{0\leq i_0<q\\ i_0\equiv a \pmod d\\i_0\equiv \alpha-\delta \pmod q}}1
\end{equation}
and
\begin{equation}\label{form2}
\sum_{\substack{\delta=0\\ \delta \equiv a \pmod d}}^{\varepsilon_{0}-1} f_j((\varepsilon_k,\ldots,\varepsilon_1,\delta)_q)=
-\frac{1}{q}\left\lceil\frac{\varepsilon_0-a}{d}\right\rceil+\sum_{\substack{0\leq \delta< \varepsilon_0\\ \delta\equiv a \pmod d\\\delta\equiv \beta \pmod q}}1.
\end{equation}
From~(\ref{gDj}),~(\ref{form1}) and~(\ref{form2}) it is straightforward to find sequences of positive integers $N$ with $g(N)>0,$ respectively $g(N)<0$. In fact, if $a\neq 0$ we can take all $N$ with $\varepsilon_1=0$, $\varepsilon_0=a$ to get $g(N)=-\frac{a}{qd}<0$. For $a=0$ we take all $N$ with $\varepsilon_1=1$, $\varepsilon_0=a$ and $s_q(N)\not\equiv j+1 \pmod d$ to get $g(N)=-1/d<0$. On the other hand, if $a+1<q$ we may take all $N$ with $\varepsilon_1=0$, $\varepsilon_0=a+1$ to find $g(N)=1+\frac{1}{d}-\frac{a+1}{qd}-\frac{1}{q}>0$. If $a+1=q$ (which again implies $d=q$) we take all $N$ with $\varepsilon_1=1$, $\varepsilon_0=0$ and $s_q(N)\equiv j+2 \pmod q$ to get $g(N)=-\frac{1}{d}+1>0$. This completes the proof in this case.

\subsection{The case $d\mid q-1$}

In what follows, set
$$E_{a,j}(k)=\# \{\;0 \le n < q^{k} \, : \;\; n\equiv a\pmod d, \;\;s_q(n) \equiv j \pmod q \},$$
where $a, j$ are fixed integers with $0\leq a<d$, $0\leq j<q$ and $k\geq 1$. Consider the generating polynomial in two variables	
$$P(x, y) = \prod_{i=0}^{k-1} (1 + x y^{q^i} + x^2 y^{2q^i} + \cdots + x^{q-1} y^{(q-1) q^i}),$$
which encodes the digits of integers less than $q^k$ in base $q$. Denote by $[x^u y^v] P(x, y)$ the coefficient of $x^u y^v$ in the expansion of $P(x, y)$. 
By Proposition~\ref{fourier},
$$E_{a,j}(k) = \sum_{\substack{u \equiv j \, (\text{mod } q) \\ v \equiv a \, (\text{mod } d)}} [x^u y^v] P(x, y)= \frac 1{dq}\sum_{\substack{\w \in U(q) \\ \e\in U(d)}} \w^{-j}\e^{-a}P(\w, \e).$$

For $\e \in U(d)$ with $d\mid q-1$ we have $\e^{lq^i}=\e^{l}$ for $0\leq l \leq q-1$ and thus
$$P(\w, \e)=(1+\w\e+\w^2\e^2+\ldots+\w^{q-1}\e^{q-1})^k.$$
Since $\w\e=1$ if and only if $\w=\e=1$ ($d$ and $q$ are coprime) and $\w^q\e^q=\e$ we get
\begin{equation}\label{Eaj}
  E_{a,j}(k)-\frac{q^{k-1}}{d}=\frac 1{dq}\sum_{\substack{\w \in U(q) \\ \e\in U(d) \\ \w\e\neq 1}} \w^{-j}\e^{-a} \left(\frac{1-\e}{1-\w\e}\right)^k.
\end{equation}

We now take a closer look at the dominant term on the right hand side in ~(\ref{Eaj}).
Note that for $\w\in U(q), \e\in U(d)$ with $\w \e \neq 1,$ we have
$$\frac{1}{\pi}\;\arg\left(\frac{1-\e}{1-\w\e}\right)\in \mathbb{Q}.$$

We claim that the numbers $\ds \frac{1-\e}{1-\w\e}$ are all pairwise distinct. Indeed, for any point on the unit circle $z\ne 1$, it can easily be seen (geometrically or otherwise) that  $\arg((1-z)^2) = \arg(z)+\pi$. It follows that $$\ds\arg\left(\left(\frac{1-\e}{1-\w\e}\right)^2\right) = - \arg(\w).$$ Therefore, if $$\ds\frac{1-\e}{1-\w\e}=\frac{1-\e'}{1-\w'\e'}$$ then we conclude that $\w$ and $\w'$ have the same argument so $\w=\w'$, and then $\e=\e'$. This means that there are no cancellations in ~(\ref{Eaj}).

Write
$$R=\max \left\{\left|\frac{1-\e}{1-\w\e}\right|:\quad \w \in U(q), \; \e\in U(d),\; \w\e\neq 1\right\}$$
and let $r_1, r_2, \ldots, r_h$ be all of the numbers $(1-\e)/(1-\w\e)$ whose absolute value equals $R$. 

The set $U(d)$ divides the unit circle into $d\geq 2$ equal parts, so it always contains an element $\e_0$ in the open half-plane $\textrm{Re}(\e)<0$. Similarly, $U(q)$ must contain an element $\w_0$ in the closed half-plane $\textrm{Re}(\e_0\w)\geq 0$. Then $|1-\e_0|>\sqrt 2$ while $|1-\w_0\e_0|\leq \sqrt{2}$, thus $$\left|\frac{1-\e_0}{1-\w_0\e_0}\right|>1.$$ Note also that $\w_0\e_0\neq 1$ as $(d,q)=1$ and $\e_0\neq 1$. 

It follows that $R>1$, which in particular implies that the value 1 is not among these $r_i$. Then, as $k\to \infty$,
$$\sum_{\substack{\w \in U(q) \\ \e\in U(d) \\ \w\e\neq 1}} \w^{-j}\e^{-a} \left(\frac{1-\e}{1-\w\e}\right)^k\quad \sim \quad R^k \sum_{i=1}^h c_i \left(\frac{r_i}{R}\right)^k,$$
for certain $c_i \in \mathbb{C}$ which are not all zero. As the $r_i$ all have arguments equal to rational multiples of $\pi$, the $r_i/R$, $i=1,\ldots, h$, are roots of unity. Therefore there exists an integer $M\geq 1$ such that $(r_i/R)^M=1$ for all $i$. Write
$$c'(k)=\sum_{i=1}^h c_i \left(\frac{r_i}{R}\right)^k.$$
Since $E_{a,j}(k)$ is real and $c'(k+M)=c'(k)$ for all $k$ we must have that $c'(k)\in \mathbb{R}$ for all $k$. Moreover,
$$\sum_{k=0}^{M-1} c'(k)=\sum_{i=1}^h c_i \sum_{k=0}^{M-1} \left(\frac{r_i}{R}\right)^k=0,$$
since $r_i$ is not real for all $i$. Thus, among all the $c'(k)$ there is at least one positive and at least one negative value. Let $-c'_1=c'(k_1)<0$ be the smallest negative value and $c_2=c'(k_2)>0$ be the largest positive value among them. 
Then, as $k \to \infty$,
$$E_{a,j}(k)-\frac{q^{k-1}}{d} \sim -\frac{c'_1}{dq}\, R^k<0, \qquad \text{for}\quad k\equiv k_1 \pmod M$$
and
$$E_{a,j}(k)-\frac{q^{k-1}}{d} \sim \frac{c'_2}{dq}\, R^k>0, \qquad \text{for}\quad k\equiv k_2 \pmod M.$$
This completes the proof.

\section{Acknowledgment}
We would like to express our gratitude to J.-P. Allouche, F. M. Dekking and J. Shallit for related correspondence on this problem.

\end{document}